\documentclass{amsart}
\usepackage[utf8]{inputenc}
\usepackage[shortlabels]{enumitem}
\usepackage{amssymb}
\usepackage{amsmath}
\usepackage{amsthm}
\usepackage[margin=1in]{geometry}
\usepackage[colorlinks]{hyperref}
\hypersetup{
    colorlinks=true,
    linkcolor=red,
    filecolor=red,      
    urlcolor=blue,
    citecolor=red,
}
\usepackage{theoremref}
\usepackage{tikz-cd}
\usepackage[textwidth=20mm]{todonotes}

\newcommand{\G}{\mathcal{G}}

\newcommand{\R}{\mathbb{R}}

\newcommand{\C}{\mathbb{C}}
\newcommand{\Z}{\mathbb{Z}}
\newcommand{\N}{\mathbb{N}}

\newcommand{\supp}{\text{supp}}

\newcommand{\Gnought}{\G^{(0)}}
\newcommand{\glambda}{\G(\Lambda)}
\newcommand{\chabauty}{\mathcal{C}(G)}
\newcommand{\hull}{\Omega_0(\Lambda)}

\newcommand{\cF}{\mathcal{F}}
\newcommand{\cP}{\mathcal{P}_{\text fin}}

\newcommand{\Linfty}{\mathrm{L}^\infty}


\theoremstyle{plain}
\newtheorem{thm}{Theorem}[section]
\newtheorem{defn}[thm]{Definition}
\newtheorem{example}[thm]{Example}
\newtheorem{lem}[thm]{Lemma}

\newtheorem{cor}[thm]{Corollary}
\newtheorem{prop}[thm]{Proposition}

\newtheorem*{question*}{Question}
\newtheorem{introtheorem}{Theorem}


\newcommand\blfootnote[1]{%
  \begingroup
  \renewcommand\thefootnote{}\footnote{#1}%
  \addtocounter{footnote}{-1}%
  \endgroup
}

\RequirePackage[backend    = biber,
                sortcites  = true,
                giveninits = true,
                doi        = false,
                isbn       = false,
                url        = false,
                maxnames = 50,
                style      = numeric-comp]{biblatex}
\DeclareNameAlias{sortname}{family-given}
\DeclareNameAlias{default}{family-given}
\addbibresource{bibli.bib}

\title{A note on inner amenability for FLC point sets}
\author{Gabriel Favre}
\address{Department of Mathematics,
Stockholm University,
SE-106 91 Stockholm, Sweden.}
\email{favre@math.su.se}

\begin{document}
\begin{abstract}
Inner amenability is a bridge between amenability of an object and amenability of its operator algebras. It is an open problem of Ananantharman-Delaroche to decide whether all étale groupoids are inner amenable. Approximate lattices and their dynamics have recently attracted increased attention and have been studied using groupoid methods. In this note, we prove that groupoids associated with approximate lattices in second countable locally compact groups are inner amenable. In fact we show that this result holds more generally for point sets of finite local complexity in such groups.
\end{abstract}
\maketitle

\blfootnote{
  \textit{MSC classification: 43A07;37A55.}
}
\blfootnote{
  \textit{Keywords: étale groupoids, inner amenability, irregular point sets.}
}

\section{Introduction}
For locally compact groups, inner amenability takes its root in the work of Losert and Rindler \cite{lori87} and was defined by Paterson in \cite{Pa88} as the existence of a Haar continuous conjugation invariant mean. It was shown by Lau-Paterson in \cite{Lapa91} that a locally compact group is amenable if and only if it is both inner amenable and its von Neumann algebra is injective. Losert and Rindler showed in \cite{lori87} that the class of inner amenable groups contains [IN]-groups and that inner amenability is in fact equivalent to having a conjugation invariant neighborhood of the identity for connected groups. More generally, locally compact groups which have a nuclear reduced $C^*$-algebra are inner amenable if and only if they are amenable. This follows from a result of Lau-Paterson (see \cite{Lapa91}). In particular, inner amenable type I groups or almost connected groups are amenable. This follows from Connes in \cite{Co76} for the almost connected case.

It has to be said that a different notion of inner amenability has been introduced for discrete groups by Effros in \cite{ef75} requiring atomlessness of means. It was motivated by its connections\footnote{See also \cite{Va12} for the comparison between property $\Gamma$ and Effros' notion of inner amenability} with property $\Gamma$ of the group's von Neumann algebra. In the present note, we do not require atomlessness of means. It makes discrete groups automatically inner amenable, as the Dirac measure at the identity is then a Haar continuous conjugation invariant mean. 

The natural step of investigating generalizations of inner amenability beyond locally compact groups has been overtaken by Anantharaman-Delaroche. She has defined inner amenability for transformation groupoids in \cite{an00} and more generally for locally compact groupoids in \cite{an21}. Inner amenability of a locally compact groupoid can be roughly defined as the existence of a net of properly supported, continuous, positive type functions on the square of the groupoid which uniformly converges to $1$ on compact subsets of the diagonal. Her notion of inner amenability has been shown to generalize the classical one for locally compact groups by Crann and Tanko in \cite{CrTa17}. As in the group case, inner amenability provides a strong connection between topological amenability of a groupoid and amenability of its operator algebras. Indeed, Anantharaman-Delaroche showed in \cite{an21} that amenable transformation groupoids are precisely inner amenable transformation groupoids whose underlying action is also amenable. Another major result highlighting the role of inner amenability for étale groupoids is the equivalence between topological amenability, amenability at infinity and nuclearity of the reduced $C^*$-algebra for inner amenable étale groupoids. 

The class of inner amenable groupoids has been shown to contain all transformation groupoids arising from discrete group actions. This led Anantharaman-Delaroche to ask the following natural question.
\begin{question*}\cite[Question 11.1 (3)]{an21}
Are all étale groupoids are inner amenable?
\end{question*}
It needs to be emphasized that outside of amenable groupoids and transformation groupoids coming from discrete group actions, very little is known about which étale groupoids are inner amenable. Our main result provides a class of example of inner amenable groupoids which are not constructed as transformation groupoids.

The class of examples of étale groupoids which we study in this note comes from the study of point sets in locally compact second countable groups. In \cite{BjHa18}, Björklund-Hartnick have developed the notion of approximate lattices and initiated their study. Approximate lattices in locally compact groups simultaneously generalize lattices in locally compact groups and Meyer sets in abelian locally compact groups. Analytical properties of approximate lattices and their consequences have recently received attention, see \cite{BjHaPo18,BjHaPo21,BjHaPo22} and \cite{bjha20,bjfi19}.

Approximate lattices are the main motivating example of point sets in locally compact groups which satisfy a regularity condition known as finite local complexity. They provide a natural source of étale groupoids whose construction can be sketched the following way. Given a point set in a second countable locally compact group, consider the action of the group on the Chabauty-Fell closure of the orbit of the point set. Restricting the obtained transformation groupoid to the elements of the Chabauty-Fell closure of the orbit containing the identity of the group yields an étale groupoid when the point set is uniformly discrete. The groupoid obtained through this construction is called the groupoid of the point set. The main result of this paper shows that this groupoid is inner amenable, provided the point set is regular enough.
\begin{introtheorem}\thlabel{thm:intro}
Let $\Lambda\subseteq G$ be a point set of finite local complexity in a locally compact group. Then the groupoid of $\Lambda$ is inner amenable.
\end{introtheorem}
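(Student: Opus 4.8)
The plan is to first reduce the statement to a structural one about the groupoid $\glambda$, and then to exhibit an explicit net of positive-type functions witnessing inner amenability in the sense of Anantharaman-Delaroche. Recall that $\glambda$ is obtained by restricting the transformation groupoid $G\ltimes\Omega$, where $\Omega$ is the Chabauty--Fell closure of the orbit $G\cdot\Lambda$, to the transversal $\hull=\{P\in\Omega : e\in P\}$. I would begin by recording the consequences of finite local complexity: it means that the difference set $\Lambda^{-1}\Lambda$ is locally finite, so in particular $\Lambda$ is uniformly discrete, whence $\glambda$ is étale and $\hull$ is a compact open subset of $\Omega$. Local finiteness of $\Lambda^{-1}\Lambda$ is moreover precisely what guarantees that $\glambda$ is Hausdorff and that the canonical cocycle $c\colon\glambda\to G$, recording the translating group element, has locally finite (hence countable, closed, uniformly discrete) image $V=c(\glambda)\subseteq G$. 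Consequently $\glambda$ is a second countable, Hausdorff, ample groupoid with compact unit space, and it decomposes as a countable disjoint union $\glambda=\bigsqcup_{v\in V}B_v$ of compact open bisections indexed by the return vectors $v\in V$, with $B_e=\hull$.

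Next I would invoke the characterisation of inner amenability as the existence of a net of continuous, properly supported, positive-type functions on the square of $\glambda$ converging to $1$ uniformly on compact subsets of the diagonal. The key structural input here is that, for a Hausdorff étale groupoid, the unit space $\Gnought$ is clopen in $\glambda$ and the diagonal is clopen in the square; this is exactly the feature that fails in the non-Hausdorff setting and is restored by finite local complexity. Because $\hull$ is moreover compact, characteristic functions of finite unions of the bisections $B_v$ are continuous, compactly supported, and constant on the diagonal, which is what makes them usable as building blocks.

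The construction itself would then proceed by exhausting $V$ by an increasing sequence of finite symmetric subsets $F_n\subseteq V$ with $e\in F_n$, and forming positive-type functions supported on $\bigsqcup_{v\in F_n}B_v$ from the corresponding characteristic functions, suitably normalised so that they equal $1$ on the diagonal. The conjugation action underlying inner amenability translates, under the cocycle $c$, into translation of return vectors, so that conjugating by a groupoid element lying over a compact subset of $G$ moves each $B_v$ to only finitely many other bisections. Finite local complexity guarantees that only finitely many return vectors are involved over any such compact set, so the invariance defect of the $n$-th function vanishes on that set once $F_n$ is large enough; this is the mechanism by which the net becomes asymptotically conjugation-invariant, while compactness of $\hull$ takes care of proper support.

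The main obstacle I anticipate is verifying the positive-type property of these finite building blocks simultaneously with the asymptotic conjugation-invariance: the groupoid conjugation is only partially defined, so one must show that the kernels assembled from the $B_v$ are genuinely positive-definite along the fibres of the square and that their conjugation defects are controlled purely by the combinatorics of $V$. This is precisely where finite local complexity does the essential work, reducing the analytic invariance condition to a finite counting problem over compact sets. Establishing Hausdorffness of $\glambda$ from finite local complexity, and checking that the construction is independent of the choices of $\Omega$ and of the exhausting sequence, are the remaining routine points.
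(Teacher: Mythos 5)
Your proposal has a genuine gap, and it sits exactly where you flagged it yourself: the positive-type property of the truncated building blocks. A kernel supported over a finite set of return vectors --- say $f_n(\gamma,\eta)=1$ exactly when $c(\gamma)=c(\eta)\in F_n$ and $0$ otherwise --- is in general \emph{not} of positive type, and this already fails in the simplest example $\Lambda=\Z\subseteq\R$, where $\glambda\cong\Z$ and $V=\Z$. Take $F_n=\{-n,\dots,n\}$, let $\gamma_1,\gamma_2,\gamma_3$ be the arrows with cocycle values $0,n,2n$, and put $\eta_i=\gamma_i$. The positive-type condition requires the matrix with entries $f_n(\gamma_i^{-1}\gamma_j,\eta_i^{-1}\eta_j)$ to be positive semidefinite; here it is the $0$--$1$ matrix recording whether $x_j-x_i\in F_n$, namely $\left(\begin{smallmatrix}1&1&0\\1&1&1\\0&1&1\end{smallmatrix}\right)$, whose determinant is $-1$ (the vector $(1,-1,1)$ gives quadratic form $-1$). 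So sharp truncation destroys positivity, and the repair you gesture at --- normalising so that defects vanish asymptotically --- amounts to a F{\o}lner-type condition on the return vectors; for an approximate lattice in a non-amenable group (the motivating case of the theorem) the return vectors form a relatively dense, uniformly discrete subset coarsely equivalent to $G$, so no such condition can hold. Two further misreadings push you toward this dead end: Hausdorffness of $\glambda$ is automatic (it is a subspace of $G\times\chabauty$), not a consequence of FLC; and Anantharaman-Delaroche's definition involves no asymptotic conjugation invariance at all --- only continuity, positive type, proper support, and uniform closeness to $1$ on compact subsets of the diagonal.

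The fix is to not truncate, and this is what the paper does: the \emph{single} function $\psi\bigl((x,P),(y,Q)\bigr)=\delta_e(x^{-1}y)$, i.e.\ the indicator of $c(\gamma)=c(\eta)$ over all of $V$ at once, is already a witness, so no net, no exhaustion $F_n$, and no invariance argument are needed. It is identically $1$ on the diagonal. It is properly supported despite meeting infinitely many bisections, because proper support does not demand compact support: if $\gamma\in K$ then $c(\gamma)$ lies in a compact $C\subseteq G$, and $c(\eta)=c(\gamma)$ forces $\eta\in(C\times\hull)\cap\glambda$, which is compact since $\glambda$ is closed in $G\times\hull$ and $\hull$ is compact. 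It is of positive type because the relation $c(\gamma_i^{-1}\gamma_j)=c(\eta_i^{-1}\eta_j)$ is symmetric and transitive in the indices, so the matrix is block-diagonal with all-ones blocks, hence positive. Finally, continuity is the only place FLC is used, via the paper's \thref{lem:FLC-discretehull}: the set $X=\bigcup_{P\in\hull}P$ is discrete (this is your ``locally finite image of the cocycle''), so $\psi$ factors through the discrete space $X\times X$. Your structural observations --- the discrete cocycle image, the decomposition into compact open bisections $B_v$, compactness of $\hull$ --- are all correct and are precisely the needed ingredients; the error is the unexamined assumption that the witnessing kernel must live on finitely many bisections and be made invariant in the limit.
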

These groupoids appeared first in \cite{BeHeZa00,BoMe19,BoMe21} for point sets in $\R^n$ where the authors use this groupoid description and K-theoretic information to extract information on the underlying dynamical system. The groupoids associated to point sets in general locally compact second countable groups have been defined in greater generality in \cite{enstad20} where they have been used in order to obtain results in frame theory. As a consequence of \thref{thm:intro}, all approximate lattices even in non-amenable groups give rise to inner amenable groupoids.

\section{Point sets and their associated groupoids}
In this section, we recall the necessary notions on point sets in locally compact groups and topological groupoids. The groupoid associated to point sets in locally compact groups can be found in \cite{enstad20}.

\subsection{Point sets and their discrete hull}
In this section, we introduce point sets in locally compact groups and an important dynamical object associated  to each point set: its discrete hull.

Let $G$ be a second countable locally compact group and consider the space $\chabauty$ of all closed subsets of $G$ with the Chabauty-Fell topology given by the following subbasis of topology 
\begin{align*}
O_K&=\{C\in\chabauty\mid C\cap K=\emptyset\} \\
O_U&=\{C\in\chabauty\mid C\cap U\neq \emptyset\},
\end{align*}
where $U$ ranges over all the open subsets of $G$ and $K$ ranges over all the compact subsets of $G$. It is a fact that the space $\chabauty$ is compact and that it is second countable, if $G$ is second countable. The topology of $\chabauty$ is conveniently described in terms the following convergence.
\begin{lem}\cite[Proposition E.1.2]{BePe92}
Let $P_n,P\in\chabauty$ for each $n \in\N$. Then $(P_n)_n$ converges to $P$ if and only if both of the
following statements hold:
\begin{enumerate}
    \item For all $x\in P$ there exists $x_n\in P_n$ such that $(x_n)_n$ converges to $x$;
    \item For all increasing sequences of integers $(n_k)_k$ and $x_{n_k}\in P_{n_k}$ such that $(x_{n_k})_k$ converges to some $x\in G$, we have $x\in P$.
\end{enumerate}
\end{lem}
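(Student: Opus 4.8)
The plan is to unwind the definition of convergence directly from the subbasis and then prove the equivalence as a pair of topological arguments, one in each direction. First I would record the key reduction: since the topology on $\chabauty$ is generated by the subbasis $\{O_K\}\cup\{O_U\}$, a sequence converges to $P$ if and only if it is eventually inside every subbasic open set containing $P$ (this standard fact holds because a basic neighbourhood is a \emph{finite} intersection of subbasic ones). Concretely this says: (a) for every compact $K$ with $P\cap K=\emptyset$ one has $P_n\cap K=\emptyset$ for all large $n$, and (b) for every open $U$ with $P\cap U\neq\emptyset$ one has $P_n\cap U\neq\emptyset$ for all large $n$. The goal then becomes to show that the conjunction of (a) and (b) is equivalent to the conjunction of (1) and (2). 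I would also note at the outset that, being second countable, locally compact and Hausdorff, $G$ is metrizable; this lets me argue sequentially throughout, use countable decreasing neighbourhood bases, and extract convergent subsequences from sequences lying in a fixed compact set.

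For the forward implication I would assume (a) and (b). To establish (1), fix $x\in P$ and a decreasing countable neighbourhood basis $(U_m)_m$ at $x$. Applying (b) to each $U_m$ gives an increasing sequence of thresholds $N_m$ beyond which $P_n\cap U_m\neq\emptyset$, and interleaving these choices produces points $x_n\in P_n$ with $x_n\to x$. To establish (2), take an increasing sequence $(n_k)_k$ with $x_{n_k}\in P_{n_k}$ and $x_{n_k}\to x$; if $x\notin P$, then since $P$ is closed and $G$ is locally compact there is a compact neighbourhood $K$ of $x$ disjoint from $P$, and (a) forces $P_n\cap K=\emptyset$ eventually, contradicting $x_{n_k}\in K$ for large $k$.

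For the reverse implication I would assume (1) and (2) and recover (a) and (b). Condition (b) is immediate: given open $U$ meeting $P$ at a point $x$, the sequence $x_n\to x$ furnished by (1) eventually lies in $U$, so $P_n\cap U\neq\emptyset$ eventually. Condition (a) is essentially the contrapositive of (2): if some compact $K$ disjoint from $P$ were to meet $P_n$ for infinitely many $n$, then choosing $x_n\in P_n\cap K$ along that index set and extracting a convergent subsequence (using sequential compactness of $K$) would yield a limit $x\in K$ which by (2) lies in $P$, contradicting $P\cap K=\emptyset$.

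The one genuinely delicate step will be the construction in (1): it needs a diagonal/interleaving argument and a little care about the regime of small $n$, where $P_n$ could a priori be empty. That case is handled by applying (b) to $U=G$, which shows $P_n$ is eventually nonempty when $P\neq\emptyset$, so the chosen sequence is defined on a tail and that is all the convergence $x_n\to x$ requires. This is also the one place where metrizability of $G$ is genuinely used, through the countable decreasing neighbourhood basis at $x$. Everywhere else the arguments are routine, relying only on sequential compactness of compact subsets of $G$, local compactness to manufacture compact neighbourhoods, and the closedness of $P$.
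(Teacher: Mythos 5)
Your proposal is correct, but there is nothing in the paper to compare it against: the paper does not prove this lemma, it simply quotes it from the cited reference \cite[Proposition E.1.2]{BePe92}. Judged on its own merits, your argument is the standard one and it is complete in all essentials. The opening reduction is valid: since basic open sets are \emph{finite} intersections of the subbasic sets $O_K$ and $O_U$, a sequence converges to $P$ if and only if it eventually enters each subbasic neighbourhood of $P$, which is exactly your conditions (a) and (b). Both directions are then handled correctly: the interleaving construction for (1) via a countable decreasing neighbourhood basis, the compact-neighbourhood separation (local compactness plus closedness of $P$) for (2), the trivial deduction of (b) from (1), and the sequential-compactness extraction deducing (a) from (2). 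You also rightly flag the two genuine subtleties that a careless write-up would miss: first, the sequential arguments need $G$ to be metrizable, which holds because a second countable locally compact Hausdorff space is metrizable by Urysohn's theorem; second, $P_n$ can be empty for small $n$, so the approximating sequence in (1) is only defined on a tail --- applying (b) with $U=G$ shows this is the only obstruction, and a tail is all that convergence requires (indeed this is how the statement of the lemma must be read, since $P_n\to P\neq\emptyset$ forces $P_n\neq\emptyset$ only eventually). If you write this up in full, the only step deserving explicit detail is the interleaving in (1): choose thresholds $N_1<N_2<\cdots$ with $P_n\cap U_m\neq\emptyset$ for $n\geq N_m$, and for $N_m\leq n<N_{m+1}$ pick $x_n\in P_n\cap U_m$; then $x_n\to x$ because the $U_m$ are decreasing.
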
 
\noindent The group $G$ acts continuously on $\chabauty$ by left translation. One defines for $\Lambda \in \chabauty$ the set
$$\Omega(\Lambda):=\overline{\{g\Lambda \mid g\in G\}}\subseteq \chabauty$$
of the closure of all the translates of $\Lambda$ in $\chabauty$ which is called the \textit{hull} of $\Lambda$. As a closed subspace of $\chabauty$, it is compact. Consider the subset $$\hull:=\{P\in\Omega(\Lambda)\mid e\in P\}$$
of elements of the hull containing the identity, called the \textit{discrete hull} of $\Lambda$. It is also compact, since it is closed in $\Omega(\Lambda)$. Before introducing the groupoid associated to point sets, let us recall the necessary groupoid terminology.

\subsection{ The groupoid of a point set}
In this section, we first fix some general notations for topological groupoids and then explain how to associate groupoids to point sets. See \cite[Chapter I]{renault80} for a comprehensive treatment on general groupoid terminology. 

A (topological) groupoid is a topological space $\G$ together with a distinguished set of \textit{units} $\Gnought$, continuous range and source maps $r,s: \G \to \Gnought$ a continuous inversion map $\gamma\to\gamma^{-1}$ from $\G$ to itself and a multiplication map $(\gamma,\eta)\to\gamma\eta$ from $$\G^{(2)}:=\{(\gamma,\eta)\in\G^2\mid s(\gamma)=r(\eta)\}$$ to $\G$ satisfying the following formulas.
\begin{itemize}
    \item $r(\gamma)=\gamma\gamma^{-1}$, $s(\gamma)=\gamma^{-1}\gamma$ for all $\gamma\in\G$;
    \item $(\gamma\eta)\mu=\gamma(\eta\mu)$ for all $(\gamma,\eta),(\eta,\mu)\in\G^{(2)}$;
    \item $(\gamma^{-1})^{-1}=\gamma$ for all $\gamma\in\G$;
    \item $r(\gamma)\gamma=\gamma s(\gamma)=\gamma$ for all $\gamma\in\G$.
\end{itemize}
In this note, groupoids will always be locally compact and Hausdorff. A \textit{bisection} of a groupoid $\G$ is a subset $U \subseteq \G$ such that the restrictions $s|_U$ and $r|_U$ are injective.  A groupoid $\G$ is \emph{{\'e}tale} if its topology has a basis consisting of open bisections.  It is called \emph{ample} if its topology has a basis consisting of compact open bisections. Equivalently, an étale groupoid is ample if (and only if) its unit space is totally disconnected.

If $\G$ is a groupoid and $A \subseteq \Gnought$ is a set of units, one obtains a groupoid by restricting the unit space of $\G$ to $A$ denoted by $\G|_A = \{g \in \G \mid d(g), r(g) \in A\}$. 

We now introduce a groupoid construction associated to a point set in a locally compact group. We use the group action on the hull to get a transformation groupoid, and then restrict its unit space to the discrete hull. A definition and study of groupoids associated with general point sets in locally compact groups can be found in \cite[Section 3]{enstad20}.

For a point set $\Lambda\subseteq G$, the group $G$ acts on the hull $\Omega(\Lambda)$ and one can form a transformation groupoid $G\ltimes \Omega(\Lambda)$. By restricting the unit space of $G\ltimes \Omega(\Lambda)$ to $\hull$, one obtains a groupoid which will be denoted by $\glambda=G\rtimes\Omega(\Lambda)\vert_{\hull}$. The unit space of $\glambda$ is $\hull$. For $\gamma=(x,P)\in\glambda$, the source, range and inverse are given by $s(x,P)=P$, $r(x,P)=xP$ and $(x,P^{-1})=(x^{-1},xP)$. The multiplication in $\glambda$ is given by $(y,xP)(x,P)=(xy,P)$. The groupoid $\glambda$ has the following description which will be repeatedly used in the sequel.
$$\glambda=\{(x,P)\in G\times\hull\mid x^{-1}\in P\}.$$
Note that the topology of $\glambda$ can be described by the convergence of sequences, since $\glambda$ is second countable.

\subsection{Regularity notions for point sets}
In this section, we recall the notions of uniform discreteness and finite local complexity of point sets and describe the impact either property has on the associated groupoid. 

\begin{defn}
A point set $\Lambda$ in a locally compact group $G$ is called \textit{uniformly discrete} if there exists a symmetric open neighborhood $U$ of the identity such that for all $P\in\Omega(\Lambda)$, we have
\begin{equation}\label{eq:uniformly-discrete}
    \vert P \cap U\vert \leq 1.
\end{equation}
If such a symmetric open neighborhood $U$ of the identity exists, we say that $\Lambda$ is $U$-discrete.
\end{defn}
\noindent It is not hard to show that we it is enough to check the condition \eqref{eq:uniformly-discrete} on translates of the point set (see \cite[Proposition 3.3]{enstad20}). For uniformly discrete point sets, there is an explicit basis for the topology of $\glambda$ consisting of open bisections described by the following proposition.
\begin{prop}\thlabel{prop:udiscrete-implies-etale}\cite[Proposition 3.10]{enstad20}
    Let $\Lambda\subseteq G$ be $U_0$-discrete. Let $V$ be a symmetric open neighborhood of the identity with $VV\subset U_0$, $W$ an open subset of $\hull$ and $x\in G$. The set $$U_{x,V,W}=((xV\cap Vx)\times W)\cap\glambda$$ is an open bisection. Further, the collection of all such sets form a basis of the topology of $\glambda$ and in particular, $\glambda$ is an étale groupoid.
\end{prop}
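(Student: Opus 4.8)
The plan is to verify the three assertions in order: openness of each $U_{x,V,W}$, the bisection property, and the basis property, with the étale conclusion then being immediate from the definition. Openness is the routine part. Since $V$ is open, both $xV$ and $Vx$ are open (left and right translation being homeomorphisms), so $xV\cap Vx$ is open in $G$ and $(xV\cap Vx)\times W$ is open in $G\times\hull$. As $\glambda$ carries the subspace topology inherited from $G\times\hull$, the set $U_{x,V,W}$ is open in $\glambda$.

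The heart of the argument, and where I expect the real work to lie, is the bisection property. I would prove separately that the source map $s(y,P)=P$ and the range map $r(y,P)=yP$ are injective on $U_{x,V,W}$, and in both cases the mechanism is to produce two points of a single set in $\Omega(\Lambda)$ that both lie in $U_0$, then invoke $U_0$-discreteness to force them to coincide. For $s$, take $(y_1,P),(y_2,P)\in U_{x,V,W}$ and write $y_i=w_ix$ with $w_i\in V$, using the factor $Vx$; then $y_1y_2^{-1}=w_1w_2^{-1}\in VV\subset U_0$, and since $y_i^{-1}\in P$ both $e=y_1y_1^{-1}$ and $y_1y_2^{-1}$ lie in $y_1P\cap U_0$. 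As $y_1P\in\Omega(\Lambda)$, the discreteness bound $\lvert y_1P\cap U_0\rvert\le 1$ gives $y_1=y_2$. For $r$, take $(y_1,P_1),(y_2,P_2)\in U_{x,V,W}$ with $y_1P_1=y_2P_2=:Q$ and write $y_i=xv_i$ with $v_i\in V$, using the factor $xV$; then $y_1^{-1}y_2=v_1^{-1}v_2\in VV\subset U_0$, while $e\in P_i$ forces $y_i\in Q$, so $e$ and $y_1^{-1}y_2$ both lie in $(y_1^{-1}Q)\cap U_0=P_1\cap U_0$. Discreteness again gives $y_1=y_2$, hence $P_1=y_1^{-1}Q=y_2^{-1}Q=P_2$. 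I would emphasize that this is precisely where both halves of $xV\cap Vx$ are needed: the right translate controls the source and the left translate controls the range, which accounts for the otherwise asymmetric-looking definition.

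Finally, for the basis property I would fix an arbitrary $(y,P)\in\glambda$ together with a basic neighborhood $(A\times B)\cap\glambda$, where $A$ is open around $y$ in $G$ and $B$ is open around $P$ in $\hull$, and sandwich a member of the family between $(y,P)$ and this neighborhood. Taking $x=y$, the set $y^{-1}A$ is an open neighborhood of the identity, so I can pick a symmetric open $V$ with $V\subseteq y^{-1}A$ and $VV\subset U_0$ (intersecting $y^{-1}A\cap(y^{-1}A)^{-1}$ with a fixed symmetric $V_0$ satisfying $V_0V_0\subset U_0$, the latter obtained from continuity of multiplication) and set $W=B$. Then $e\in V$ gives $(y,P)\in U_{y,V,B}$, while $yV\cap Vy\subseteq yV\subseteq A$ gives $U_{y,V,B}\subseteq(A\times B)\cap\glambda$. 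Since each $U_{x,V,W}$ is an open bisection and these sets form a basis of the topology, $\glambda$ is étale by definition.
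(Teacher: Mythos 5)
Your proof is correct in all three parts: the openness argument, the two injectivity arguments (correctly applying $U_0$-discreteness to $y_1P$ and to $P_1$, both of which lie in $\Omega(\Lambda)$), and the basis argument via $x=y$, $V\subseteq y^{-1}A\cap (y^{-1}A)^{-1}\cap V_0$, $W=B$. Note that the paper itself does not prove this proposition but cites it from \cite[Proposition 3.10]{enstad20}; your argument is the natural one for that statement (and your observation that $Vx$ controls injectivity of $s$ while $xV$ controls injectivity of $r$ is exactly the point of the asymmetric-looking set $xV\cap Vx$), so there is nothing to flag.
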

In fact, uniformly discrete point sets are precisely the point sets $\Lambda$ whose associated groupoid is étale (see \cite[Proposition 3.11]{enstad20}). Now we introduce another standard regularity notion. 
\begin{defn}
A point set $\Lambda\subseteq G$ is said to have \textit{finite local complexity} (FLC) if for any compact $K\subseteq G$, there exists a finite collection of finite subsets $\cF\subseteq 2^G$ such that for any $P\in\hull$, we have 
\begin{equation}\label{eq:FLC}
    K\cap P=hF,
\end{equation}for some $h\in G$ and $F\in\cF$.
\end{defn}
It is enough to check the condition \eqref{eq:FLC} on translates of the point set. It is a classical fact that a point set has finite local complexity if and only if $\Lambda^{-1}\Lambda$ is uniformly discrete, which is in turn equivalent to the existence a finite set $F\subseteq G$ such that $\Lambda^{-1}\Lambda\subseteq F\Lambda$ (see \cite[Proposition 2.1]{bagr13}). Applying the condition \eqref{eq:FLC} to a small compact neighborhood of the identity, it is not hard to see that the FLC condition implies uniform discreteness.

\begin{prop}
Let $\Lambda$ be an FLC point set in a second countable locally compact group. Then the discrete hull $\Omega_0(\Lambda)$ is totally disconnected.
\end{prop}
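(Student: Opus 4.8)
The plan is to exploit the rigidity forced by finite local complexity: I claim that every element of the discrete hull is a subset of the single fixed set $\Delta := \Lambda^{-1}\Lambda$, which is closed and discrete, after which total disconnectedness becomes a soft statement about subsets of a discrete set. First I would record that, by the characterization quoted above, FLC makes $\Delta=\Lambda^{-1}\Lambda$ uniformly discrete; in particular $\Delta$ is a closed, discrete subset of $G$, since a uniformly separated set has no accumulation points and contains all of its limit points.

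Next I would show that for every $P\in\Omega(\Lambda)$ and all $x,y\in P$ one has $x^{-1}y\in\Delta$. Since $G$, and hence $\chabauty$, is second countable, I may choose a sequence of translates $g_n\Lambda\to P$; the convergence criterion of the cited lemma then supplies $x_n,y_n\in g_n\Lambda$ with $x_n\to x$ and $y_n\to y$. Writing $x_n=g_na_n$ and $y_n=g_nb_n$ with $a_n,b_n\in\Lambda$ gives $x_n^{-1}y_n=a_n^{-1}b_n\in\Delta$, and continuity of the group operations yields $x_n^{-1}y_n\to x^{-1}y$; closedness of $\Delta$ then forces $x^{-1}y\in\Delta$. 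Specializing to $x=e$, which belongs to every $P\in\hull$, shows $P\subseteq\Delta$ for all $P\in\hull$.

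With this inclusion in hand the conclusion is immediate. For each $x\in\Delta$, discreteness of $\Delta$ together with local compactness of $G$ let me choose an open set $U_x\ni x$ with compact closure satisfying $\overline{U_x}\cap\Delta=\{x\}$. For $P\in\hull$ I then have $P\cap U_x\neq\emptyset\iff x\in P\iff P\cap\overline{U_x}\neq\emptyset$, using $P\subseteq\Delta$. Hence the set $\{P\in\hull\mid x\in P\}$ equals $O_{U_x}\cap\hull$ while its complement in $\hull$ equals $O_{\overline{U_x}}\cap\hull$; both are open in the Chabauty–Fell topology, so the set is clopen. Given distinct $P,P'\in\hull$ there is some $x\in\Delta$ lying in exactly one of them, and the corresponding clopen set separates them; thus $\hull$ is totally separated, a fortiori totally disconnected. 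Equivalently, one sees that the relative Chabauty–Fell topology on the subsets of the discrete set $\Delta$ is just the product topology of $\{0,1\}^{\Delta}$.

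The only genuinely delicate point is the inclusion $P\subseteq\Delta$; everything downstream is routine once the elements of the discrete hull are confined to a fixed closed discrete set. I would therefore be careful to justify that uniform discreteness of $\Lambda^{-1}\Lambda$ yields both closedness, so that the limit $x^{-1}y$ remains in $\Delta$, and genuine discreteness, so that the separating neighborhoods $U_x$ exist.
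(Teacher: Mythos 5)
Your proof is correct, but it takes a genuinely different route from the paper's. The paper argues via patch types: given a compact $K$, the FLC condition yields a finite collection $\cF$ of finite sets, and the sets $A_{F,K}=\{P\in\hull\mid \exists h\in G,\ K\cap P=hF\}$ form a finite cover of $\hull$ by disjoint closed sets, hence by clopen sets; a careful choice of $K$ then separates any two distinct points of $\hull$ by such patch-type sets. You instead invoke the cited equivalence of FLC with uniform discreteness of $\Delta=\Lambda^{-1}\Lambda$ and establish the structural inclusion $P\subseteq\Delta$ for every $P\in\hull$ — and your argument for it is sound: second countability justifies working with sequences, the convergence lemma supplies $x_n\to x$, $y_n\to y$ with $x_n^{-1}y_n\in\Delta$, and $\Delta$ is closed and discrete because it is uniformly discrete. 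After that, the sets $\{P\in\hull\mid x\in P\}=O_{U_x}\cap\hull$, with complements $O_{\overline{U_x}}\cap\hull$ for a relatively compact neighborhood $U_x$ isolating $x$ in $\Delta$, are clopen and separate points, so $\hull$ is even totally separated. It is worth noting that your inclusion carries strictly more information than the paper's later \thref{lem:FLC-discretehull}, which asserts only that $X=\bigcup_{P\in\hull}P$ is discrete; since your argument gives $X\subseteq\Delta$, it implies that lemma at once, so your route would also streamline the proof of \thref{thm:inner-amenability-FLC}, and it exhibits $\hull$ as a compact subspace of $\{0,1\}^{\Delta}$. What the paper's patch-type argument buys instead is self-containedness — it uses only the FLC definition itself, not the characterization via $\Lambda^{-1}\Lambda$ — and it produces the clopen partition of $\hull$ indexed by local configurations, which is data of independent dynamical interest.
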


\begin{proof}
Fix a compact set $K \subseteq G$ and denote by $\cF\subseteq 2^G$ the finite collection of finite subsets of $G$ given by the FLC condition applied to $K$. For any $P\in\hull$, we have $K\cap P=hF$, for some $h\in G$ and $F\in\cF$. The sets $$A_{F,K}:=\{P\in\hull\mid \exists h\in G\ K\cap P = hF\}$$ are clearly disjoint and closed for every $F\in\mathcal{F}$. Since the collection $\{A_{F,K}\}_{F\in\cF}$ covers $\hull$, each $A_{F,K}$ is also open. To finish the proof, we argue that for any $P,Q\in\hull$ with $P\neq Q$, we can find a compact set $K\subseteq G$ and $F\in \cF$ such that $P\in A_{F,K}$ and $Q\not\in A_{F_K}$. 

Choose two such sets $P,Q\in\hull$ and without loss of generality let $x\in P\setminus Q$. Pick a compact set $K\subseteq G$ such that $K\cap P = \{e,x\}$ and $x^{-1}\not\in K$. By applying the FLC condition to $K$ we find a finite family $\cF$ of finite subsets of $G$ and two elements $g,h\in G$ such that $K\cap P=gF$ and $K\cap Q=hF'$ for some $F,F'\in\cF$. Since $e\in Q$ and $x\not\in K$, there is an element $y\in Q$ such that $K\cap Q=\{e,y\}$. Assume towards a contradiction that $F=F'$. Then, there is an equality of sets $$F= \{g^{-1},g^{-1}x\}=\{h^{-1},h^{-1}y\}.$$ Since $x\not\in Q$, then $x\neq y$ which forces $g^{-1}=h^{-1}y$ and $h^{-1}=g^{-1}x$. But then $g^{-1}=h^{-1}y=g^{-1}xy$. This implies $x=y^{-1}$, which is a contradiction since $y=x^{-1}\not\in K$ by assumption on $K$. Hence $P\in A_{F,K}$ and $Q\not\in A_{F,K}$, as desired.
\end{proof}
Note also that the groupoid $\glambda$ is ample when $\Lambda$ is FLC, since it is both étale and has a totally disconnected unit space.

\begin{example}\thlabel{ex:lattice}
Let $\Lambda$ be a lattice in a locally compact group $G$. Then $\Lambda$ is FLC since $\Lambda^{-1}\Lambda=\Lambda$ is uniformly discrete. The groupoid $\mathcal{G}(\Lambda)$ can be described the following way. Let $P\in\hull$. There exists a sequence $\{\lambda_n\}_{n\in\N}\subseteq \Lambda$ such that $P=\lim \lambda_n^{-1}\Lambda$. Since $\lambda_n^{-1}\Lambda=\Lambda$ for all $n\in\N$, we conclude that $P=\lim_n\Lambda=\Lambda$. This shows that $\hull=\{\Lambda\}$. One can further identify $\Omega(\Lambda)\setminus\{\emptyset\}$ with $G/\Lambda$ as a $G$-space. Putting everything together, one obtains:
\begin{align*}
    \glambda&=\{(x,P)\in G\times\Omega(\Lambda)\mid x^{-1}\in P \text{ and } xP,P\in\hull\}\\
    &=\{(x,g\Lambda)\in G\times G/\Lambda\mid g\Lambda=xg\Lambda = \Lambda\}\\
    &=\{(x,\Lambda)\in G\times\{\Lambda\}\mid x\in\Lambda\}=\Lambda.
\end{align*}
\end{example}
Hence, in the case where $\Lambda$ is a lattice, the groupoid construction $\glambda$ recovers $\Lambda$. For general point sets, the $G$-space $\Omega(\Lambda)\setminus\{\emptyset\}$ appears to be a good replacement of $G/\Lambda$. Recall that a lattice $\Lambda$ in a group $G$ is a discrete subgroup such that the $G$-space $G/\Gamma$ has a finite invariant measure. By relaxing the subgroup condition and replacing $G/\Lambda$ by $\Omega(\Lambda)$, one obtains the notion of strong approximate lattice (see \cite[Definition 1.3]{BjHa18}).

\begin{example}\thlabel{ex:Meyer-set}
Let $G$ and $H$ be second countable locally compact groups and call $\pi_G$ and $\pi_H$ the projections from $G\times H$ onto $G$ and $H$ respectively. Let $\Gamma\subseteq G\times H$ be a lattice such that $\pi_G$ is injective when restricted to $\Gamma$ and such that $\pi_H(\Gamma)$ is dense in $H$. Let $W$ be a symmetric compact neighborhood of the identity in $H$. The set $$\Lambda=\Lambda(\Gamma,W) = \pi_G((G\times W)\cap\Gamma)\subseteq G$$ is called a \textit{model set}. Note that 
$$\Lambda^{-1}\Lambda\subseteq\pi_G((G\times W^{-1}W)\cap\Gamma),$$ and the latter is uniformly discrete. Hence $\Lambda$ is FLC.
\end{example}

\section{Inner amenability of groupoids associated with point sets}
In this section, we introduce inner amenability and prove the main result of this note (see \thref{thm:inner-amenability-FLC}), namely that groupoids associated with FLC point sets in second countable locally compact groups are inner amenable.
\subsection{Inner amenable groupoids} We will summarize \cite[Section 5]{an21} which presents the current state of the art concerning inner amenability of locally compact second countable groupoids. The original definition and idea of inner amenability come from the following equivalence for locally compact groups in \cite[Proposition 1]{lori87}.  A locally compact $G$ admits a conjugation invariant mean on $L^\infty(G)$ if and only if there is a net $\xi_i$ of positive functions on $C_0(G)$ in the unit ball of $L^2(G)$ such that the matrix coefficients $\langle \xi_i,\lambda_s\rho_s\xi_i\rangle$ converge uniformly on compact subsets of $G$ to $1$. A locally compact group satisfying this property is called inner amenable. Denoting by $f_i$ the matrix coefficients 
    $$f_i:G\times G\to \C:(s,t)\mapsto\langle\xi_i,\lambda_s\rho_t\xi_i\rangle,$$
one sees that $f_i$ is a net of continuous, positive definite functions converging uniformly to $1$ on the diagonal of $G\times G$ such that $\supp(f_i)\cap(G\times K)$ and $\supp(f_i)\cap (K\times G)$ is compact for every $i$ and every $K\subseteq G$ compact. In fact (see \cite[Theorem 3.5]{CrTa17}) the existence of such a net of continuous functions is equivalent to the existence of a conjugation invariant mean on $L^\infty(G)$. Note that the existence of Dirac measures at the identity witnesses inner amenability for discrete groups. This definition of inner amenability in terms of positive type functions has been generalized to the framework of groupoids. The study of inner amenability for groupoids associated with point sets in locally compact groups is the content of this piece.

We now introduce the appropriate notion of inner amenability for groupoids (see \cite[Section 5]{an21}). For a groupoid $\G$, a function $f$ on $\G\times\G$ is \textit{properly supported} if for all compacts $K\subseteq \G$, the sets $\supp(f)\cap (K\times\G)$ and $\supp(f)\cap (\G\times K)$ are compact. A function $f$ is said to be of \textit{positive type} if for all units $x,y\in \G^{(0)}$, and $\gamma_1,\ldots,\gamma_n\in \G^x$, $\eta_1,\ldots,\eta_n\in \G^y$, the matrix $(f(\gamma_i^{-1}\gamma_i,\eta_j^{-1}\eta_j))_{i,j}$ is positive definite. 

\begin{defn}\cite[Definition 5.3]{an21}
A groupoid $\G$ is inner amenable if for all compact $K\subseteq \G$ and $\epsilon>0$, one can find a positive type, continuous, properly supported function $f$ on $\G\times\G$ with $\mid f(\gamma,\gamma)-1\mid<\epsilon$ for all $\gamma\in K$. 
\end{defn}
The existence of such functions for amenable locally compact groupoids is classical, see \cite[Chapter 2]{AnRe00}. So amenable groupoids are inner amenable. When the groupoid $\G$ is a discrete group, and $(\lambda\times\rho,\ell^2(\G))$ denotes the product of left and right regular representations on $\ell^2(\G)$ then the matrix coefficient associated with $\delta_e\in L^2(\G)$ in the representation $\lambda\times\rho$ witnesses inner amenability of $\G$. Recall that a continuous map between groupoids which respects the source, target and multiplication is called a \textit{ groupoid morphism}. A groupoid morphism $\rho:\mathcal{H}\to\G$ is \textit{locally proper} if the map 
\begin{align*}
    \psi = \rho\times s\times r :\G&\to \mathcal{H}\times\Gnought\times\Gnought\\
    g&\mapsto ((\rho(g),s(g),r(g))),
\end{align*}
is proper. An example of such a map is the inclusion map of a closed subgroupoid (see \cite[Example 4.19(a)]{an21}). The main tool to find examples of inner amenable groupoids is the following proposition.
\begin{prop}\cite[Proposition 5.6]{an21}\thlabel{prop:locally-proper}
 Let $\G$ and $\mathcal{H}$ be locally compact groupoids. Assume there exists a locally proper groupoid morphism $\rho:\mathcal{H}\to\G$. If $\G$ is inner amenable, then so is $\mathcal{H}$. 
\end{prop}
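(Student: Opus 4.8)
The plan is to witness inner amenability of $\mathcal{H}$ by pulling back a witness for $\G$ along $\rho\times\rho$ and then \emph{compressing} the result by a compactly supported cutoff on the unit space $\mathcal{H}^{(0)}$; the cutoff is the device that repairs proper support once $\rho$ is only locally proper. Concretely, fix a compact set $L\subseteq\mathcal{H}$ and $\epsilon>0$. Since $\rho$ is continuous, $K:=\rho(L)$ is compact in $\G$, and since $\rho$ is a groupoid morphism one has $\rho(\mathcal{H}^{(0)})\subseteq\Gnought$ together with $s(\rho(h))=\rho(s(h))$ and $r(\rho(h))=\rho(r(h))$. Using inner amenability of $\G$ I would pick a continuous, positive type, properly supported $g$ on $\G\times\G$ with $|g(\gamma,\gamma)-1|<\epsilon$ for all $\gamma\in K$. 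As $\mathcal{H}^{(0)}$ is locally compact Hausdorff and $s(L)\cup r(L)$ is compact, I choose $\chi\in C_c(\mathcal{H}^{(0)})$ with $0\le\chi\le1$ and $\chi\equiv1$ on $s(L)\cup r(L)$, and set
\[
f(h_1,h_2)=\chi(s(h_1))\chi(r(h_1))\chi(s(h_2))\chi(r(h_2))\,g(\rho(h_1),\rho(h_2)).
\]
Continuity is clear, and for $h\in L$ the cutoff factors all equal $1$, so $f(h,h)=g(\rho(h),\rho(h))$ with $\rho(h)\in K$, giving $|f(h,h)-1|<\epsilon$.

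Next I would check that $f$ is of positive type. The bare pullback $(h_1,h_2)\mapsto g(\rho(h_1),\rho(h_2))$ is of positive type: for units $u,v\in\mathcal{H}^{(0)}$ and $h_1^i\in\mathcal{H}^u$, $h_2^i\in\mathcal{H}^v$, the morphism identities turn the defining matrix $\big(g(\rho((h_1^i)^{-1}h_1^j),\rho((h_2^i)^{-1}h_2^j))\big)_{i,j}$ into $\big(g(\gamma_i^{-1}\gamma_j,\eta_i^{-1}\eta_j)\big)_{i,j}$ with $\gamma_i=\rho(h_1^i)\in\G^{\rho(u)}$ and $\eta_i=\rho(h_2^i)\in\G^{\rho(v)}$, which is positive definite by the positive type of $g$. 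Writing $\alpha_i=(h_1^i,h_2^i)$ and evaluating the four cutoff factors at $\alpha_i^{-1}\alpha_j$, one has $r\big((h_1^i)^{-1}h_1^j\big)=s(h_1^i)$ and $s\big((h_1^i)^{-1}h_1^j\big)=s(h_1^j)$ (likewise in the second coordinate), so the cutoff factors collapse to $D_{ii}D_{jj}$ with $D_{ii}=\chi(s(h_1^i))\chi(s(h_2^i))$ real. The compressed matrix is therefore the congruence $D\big(g(\gamma_i^{-1}\gamma_j,\eta_i^{-1}\eta_j)\big)D$, which stays positive definite. Hence $f$ is of positive type.

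The decisive step is proper support, and this is where local properness is used; I expect it to be the main obstacle, since a naive pullback fails precisely because $\rho^{-1}(\text{compact})$ need not be compact. I would show $\supp(f)\cap(L\times\mathcal{H})$ is compact, the set $\mathcal{H}\times L$ being symmetric. On $\supp(f)$ one has $s(h_i),r(h_i)\in\supp(\chi)=:P$ for $i=1,2$ and $(\rho(h_1),\rho(h_2))\in\supp(g)$. If moreover $h_1\in L$, then $\rho(h_1)\in K$, so $(\rho(h_1),\rho(h_2))$ lies in $\supp(g)\cap(K\times\G)$, which is compact by proper support of $g$; projecting to the second coordinate confines $\rho(h_2)$ to a fixed compact $Q\subseteq\G$. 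Thus $(\rho(h_2),s(h_2),r(h_2))\in Q\times P\times P$, and local properness of $\rho$ — properness of $\psi=\rho\times s\times r$ — forces $h_2$ into the compact set $\psi^{-1}(Q\times P\times P)$. Therefore $\supp(f)\cap(L\times\mathcal{H})$ is a closed subset of $L\times\psi^{-1}(Q\times P\times P)$, hence compact, and the symmetric argument (using $\supp(g)\cap(\G\times K)$) handles $\mathcal{H}\times L$. This shows $f$ is properly supported, so $f$ witnesses inner amenability of $\mathcal{H}$ for the pair $(L,\epsilon)$, as required.
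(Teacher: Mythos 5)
The paper offers no proof of this proposition---it is quoted verbatim from \cite[Proposition 5.6]{an21}---so the only comparison available is with the original argument there, which yours essentially reproduces: pull the positive type function back along $\rho\times\rho$ and compress by the compactly supported cutoff $\chi$ on $\mathcal{H}^{(0)}$, the cutoff being exactly what lets local properness of $\psi=\rho\times s\times r$ restore proper support. Your three verifications are sound (the congruence $DMD$ correctly handles positive type, and the proper-support argument, though phrased only for the fixed compact $L$, applies verbatim to an arbitrary compact set, which is what the definition requires), so the proposal is correct and takes the standard route.
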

Using the inclusion map of closed subgroupoids, the fact that closed subgroupoids of inner amenable groupoids are inner amenable can be easily deduced. Another consequence of \thref{prop:locally-proper} is that étale transformation groupoids are inner amenable. Indeed, if a discrete group $\Gamma$ acts on a space $X$, then the projection map $X\rtimes \Gamma\to\Gamma$ is locally proper and $\Gamma$ is inner amenable because it is a discrete group, hence the inner amenability of $X\rtimes\Gamma$. After establishing this list of properties, Anantharaman-Delaroche asks whether all étale groupoids are inner amenable (see \cite[Problem, Section 5]{an21}). The groupoid attached to a point set in a locally compact group is an example of an inner amenable étale groupoid, which is not a transformation groupoid known to be inner amenable, unless the point set is a lattice or the group is inner amenable.

\subsection{Inner amenability for FLC point sets}
In this section, we prove that groupoids attached to point sets with finite local complexity in second countable locally compact groups are inner amenable. In what follows, we fix a locally compact second countable topological group $G$ and an FLC point set $\Lambda\subseteq G$. Recall that $\Lambda$ is in particular uniformly discrete and we will denote by $U\subseteq G$ an open set with respect to which $\Lambda$ is $U$-discrete. 

The following lemma will be central to the construction of continuous functions from groupoids associated with FLC point sets.
\begin{lem}\thlabel{lem:FLC-discretehull}    
Let $\Lambda\subseteq G$ be an FLC point set in a locally compact group. Then the set $X=\bigcup_{P\in\hull}P$ is discrete.
\end{lem}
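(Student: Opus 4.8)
The plan is to trap $X$ between $\Lambda$ and the uniformly discrete set $\Lambda^{-1}\Lambda$, using the fact recalled above that $\Lambda$ being FLC is equivalent to $\Lambda^{-1}\Lambda$ being uniformly discrete. First I would fix a symmetric open neighborhood $W$ of the identity witnessing the uniform discreteness of $\Lambda^{-1}\Lambda$, so that $\vert Q\cap W\vert\le 1$ for every $Q\in\Omega(\Lambda^{-1}\Lambda)$. The key preliminary observation is that this separation propagates to pairs of points inside a single pattern: if $a\neq b$ both lie in some $Q\in\Omega(\Lambda^{-1}\Lambda)$, then $a^{-1}Q$ again lies in the ($G$-invariant) hull and contains both $e$ and $a^{-1}b$, so $a^{-1}b\in W$ would force two distinct points of $a^{-1}Q$ into $W$, a contradiction. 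Hence $a^{-1}b\notin W$ for distinct $a,b\in Q$. Applying this to $Q=\overline{\Lambda^{-1}\Lambda}$, the base point of its own hull, shows that $\overline{\Lambda^{-1}\Lambda}$ is $W$-separated, hence closed and discrete.

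The heart of the argument is the containment $X\subseteq\overline{\Lambda^{-1}\Lambda}$. Take $x\in X$, so $x\in P$ for some $P\in\hull$; by definition of the discrete hull we also have $e\in P$. Since $G$ is second countable, $\Omega(\Lambda)$ is metrizable and $P$ is the Chabauty--Fell limit of a \emph{sequence} of translates $g_n\Lambda$. I would then apply the convergence characterization of the Chabauty--Fell topology twice: to $e\in P$ to extract $\lambda_n\in\Lambda$ with $g_n\lambda_n\to e$, and to $x\in P$ to extract $\mu_n\in\Lambda$ with $g_n\mu_n\to x$. Setting these together gives $\lambda_n^{-1}\mu_n=(g_n\lambda_n)^{-1}(g_n\mu_n)\in\Lambda^{-1}\Lambda$, and by joint continuity of multiplication and inversion in the topological group $G$ one obtains $\lambda_n^{-1}\mu_n\to e^{-1}x=x$. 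Thus $x$ is a limit of points of $\Lambda^{-1}\Lambda$, i.e. $x\in\overline{\Lambda^{-1}\Lambda}$, and since $x\in X$ was arbitrary this proves $X\subseteq\overline{\Lambda^{-1}\Lambda}$.

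Combining the two steps finishes the proof. Since $X\subseteq\overline{\Lambda^{-1}\Lambda}$ and the latter is $W$-separated, any two distinct points $x,y\in X$ satisfy $x^{-1}y\notin W$; equivalently, for each $x\in X$ the open set $xW$ meets $X$ only in $x$. Hence every point of $X$ is isolated and $X$ is discrete.

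I expect the main obstacle to be the containment step, where one has to feed the two approximating sequences (one converging to $e$, one to $x$) correctly through the group operations so as to land inside $\Lambda^{-1}\Lambda$ while still converging to $x$; the reduction from nets to sequences is what makes this manageable and is justified by second countability, while the $G$-invariance of the hull is precisely what licenses the translation trick used to establish the uniform separation in the first step.
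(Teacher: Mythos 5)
Your proof is correct, but it follows a genuinely different route from the paper's. The paper argues directly from the patch form of FLC: for a compact $K$ containing $e$, every $P\in\hull$ satisfies $P\cap K=hF$ for some $F$ in the finite family $\cF$ attached to $K$, and the normalization $e\in P\cap K$ forces $h\in F^{-1}$; hence $X\cap K\subseteq\bigcup_{F\in\cF}F^{-1}F$ is finite, and local finiteness of $X$ gives discreteness. You instead route everything through the difference set: using the quoted equivalence between FLC and uniform discreteness of $\Lambda^{-1}\Lambda$, you prove the containment $X\subseteq\overline{\Lambda^{-1}\Lambda}$ by a sequential Chabauty--Fell limit argument and conclude that $X$ is $W$-separated. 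The two proofs exploit the same mechanism --- membership of $e$ in every $P\in\hull$ eliminates the unknown translate, and your elements $\lambda_n^{-1}\mu_n\in\Lambda^{-1}\Lambda$ are the global counterpart of the paper's local difference sets $F^{-1}F$ --- but the trade-offs differ. The paper's argument is self-contained (it uses only the definition of FLC, no limit arguments, and no second countability, even though that is the standing hypothesis of the section), whereas yours depends on the cited equivalence from the literature, on metrizability of $\chabauty$ to reduce to sequences (a net version of the convergence criterion would remove this dependence), and, tacitly, on $\Lambda^{-1}\Lambda$ being closed so that its hull is defined (harmless, since uniform discreteness forces closedness, and you pass to the closure anyway). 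In exchange, your proof yields a formally stronger conclusion: $X$ is not merely discrete but uniformly discrete, being $W$-separated as a subset of $\overline{\Lambda^{-1}\Lambda}$; the paper's proof instead gives local finiteness of $X$. Either conclusion is more than the lemma asks for, and either suffices for the continuity argument in the main theorem.
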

\begin{proof}
It is enough to show that for every compact set $K\subseteq G$, $K\cap X$ is finite. Let $K$ be such a set. We may assume that the identity element $e$ of $G$ is in $K$, otherwise we may replace $K$ by a bigger compact set $K'$ in $G$ containing $K$ and $e$. Let $\cF\subseteq\cP(G)$ be a finite collection of finite subsets of $G$ given by the FLC condition applied to $K$. In other words, for every $P\in \hull$, there exists $h\in G$ and $F\in\cF$ such that $P\cap K=hF$. We claim that $$X\cap K\subseteq\bigcup_{\substack{F\in\cF,\\ h\in F^{-1}}}hF,$$ which is finite. Let $x\in X\cap K$. Then there exists $P\in\hull$ such that $x\in P\cap K$. Let $h\in G$ and $F\in \cF$ such that $P\cap K=hF$. Since $e\in P\cap K$, one has $h^{-1}\in F$, hence $h\in F^{-1}$. As a consequence, $$x\in hF\subseteq F^{-1}F\subseteq\bigcup_{\substack{F\in\cF,\\ h\in F^{-1}}}hF,$$ as desired. 
\end{proof}

We state and prove the main result of this note now.
\begin{thm}\thlabel{thm:inner-amenability-FLC}
Let $\Lambda\subseteq G$ be an FLC point set in a locally compact second countable group $G$. Then the groupoid $\G(\Lambda)$ is inner amenable.
\end{thm}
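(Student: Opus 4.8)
The plan is to identify $\glambda$ with a closed subgroupoid of a transformation groupoid coming from a \emph{discrete} group action, and then invoke two facts recorded above: transformation groupoids of discrete group actions are inner amenable, and closed subgroupoids of inner amenable groupoids are inner amenable. The whole point will be that finite local complexity---through \thref{lem:FLC-discretehull}---forces the $G$-coordinate of an element of $\glambda$ to be \emph{locally constant}, so that although $G$ itself may be far from discrete (and far from inner amenable), the groupoid $\glambda$ only ever sees a countable discrete subgroup.

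First I would set $X=\bigcup_{P\in\hull}P$ and let $\Gamma=\langle X\rangle\le G$ be the subgroup it generates, equipped with the \emph{discrete} topology; write $\Gamma_d$ for the resulting discrete group. By \thref{lem:FLC-discretehull} the set $X$ is discrete, hence countable (as $G$ is second countable), so $\Gamma_d$ is a countable discrete group. It acts on the compact space $\Omega(\Lambda)$ by the restriction of the translation action, and since $\Gamma_d$ is discrete this action is continuous; thus $\Gamma_d\ltimes\Omega(\Lambda)$ is a transformation groupoid of a discrete group and is therefore inner amenable.

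Next I would check the set-theoretic identity $\glambda=(\Gamma_d\ltimes\Omega(\Lambda))\vert_{\hull}$. An element $(g,P)\in\glambda$ satisfies $g^{-1}\in P\subseteq X\subseteq\Gamma$, so automatically $g\in\Gamma$; conversely every $(g,P)$ with $g\in\Gamma$ and $P,gP\in\hull$ lies in $\glambda$. The main work---and the step I expect to be the crux---is to show that this identity is a \emph{homeomorphism}, i.e. that the subspace topology from $G\times\hull$ agrees with the one from $\Gamma_d\times\hull$. The nontrivial inclusion is that a $\Gamma_d$-basic open set $\{g_0\}\times W$ meets $\glambda$ in a set that is open for the $G$-topology. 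Here \thref{lem:FLC-discretehull} is essential: discreteness of $X$ lets one choose a small symmetric neighbourhood $V$ of the identity with $X\cap g_0^{-1}V=\{g_0^{-1}\}$, whence $\{P\in\hull\mid g_0^{-1}\in P\}=\{P\in\hull\mid P\cap g_0^{-1}V\neq\emptyset\}$ is open in $\hull$, and $((g_0V)\times W)\cap\glambda$ collapses onto $\{g_0\}\times(W\cap\{P\mid g_0^{-1}\in P\})$. Conceptually this says that the continuous map $(g,P)\mapsto g$ takes values in the discrete set $X^{-1}$ and is therefore locally constant on $\glambda$, so that the two topologies coincide.

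Finally, since $\hull$ is closed in $\Omega(\Lambda)$, the groupoid $(\Gamma_d\ltimes\Omega(\Lambda))\vert_{\hull}$ is a closed subgroupoid of the inner amenable groupoid $\Gamma_d\ltimes\Omega(\Lambda)$, and closed subgroupoids of inner amenable groupoids are inner amenable. Identifying this with $\glambda$ via the previous step yields the theorem. The only genuinely delicate point is the topological identification in the third step; everything else is a matter of assembling results already available.
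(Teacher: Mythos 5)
Your argument is correct in substance and takes a genuinely different route from the paper's, so let me compare the two. The paper proves the theorem by exhibiting an explicit witness: the function $\psi\bigl((x,P),(y,Q)\bigr)=\delta_e(x^{-1}y)$ on $\glambda\times\glambda$, whose diagonal values and proper support are immediate, whose positive type is checked via a block decomposition of the relevant matrices, and whose continuity is exactly where \thref{lem:FLC-discretehull} enters. You use the same lemma for a different purpose: writing $X=\bigcup_{P\in\hull}P$, the first-coordinate projection of $\glambda$ is continuous with values in the discrete set $X^{-1}$, hence locally constant, so the topology $\glambda$ inherits from $G\times\hull$ coincides with the one it inherits from $\Gamma_d\times\hull$; this identifies $\glambda$ with $(\Gamma_d\ltimes\Omega(\Lambda))\vert_{\hull}$, the restriction of a transformation groupoid of a countable discrete group to the closed set $\hull$, and you conclude from the two permanence facts the paper records around \thref{prop:locally-proper} (transformation groupoids of discrete group actions are inner amenable; closed subgroupoids inherit inner amenability). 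Your route is softer and yields a structural fact of independent interest, namely that the groupoid of an FLC point set is a closed restriction of a discrete transformation groupoid; the paper's route is self-contained and explicit. The two proofs in fact meet at the core: unwinding your two applications of \thref{prop:locally-proper} --- pulling the canonical witness $(s,t)\mapsto\delta_e(s^{-1}t)$ on $\Gamma_d\times\Gamma_d$ back along the projection and then restricting to the closed subgroupoid --- produces exactly the paper's function $\psi$.

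One detail needs repair, though it does not threaten the argument. You choose a symmetric $V$ with $X\cap g_0^{-1}V=\{g_0^{-1}\}$ and assert that $((g_0V)\times W)\cap\glambda$ collapses onto $\{g_0\}\times\bigl(W\cap\{P\mid g_0^{-1}\in P\}\bigr)$. But $(g,P)\in((g_0V)\times W)\cap\glambda$ only gives $g^{-1}\in Vg_0^{-1}$, and for non-abelian $G$ the sets $Vg_0^{-1}$ and $g_0^{-1}V$ differ, so your hypothesis on $V$ does not force $g=g_0$. The fix is immediate: since $X^{-1}$ is discrete, choose instead an open $O\ni g_0$ with $X^{-1}\cap O=\{g_0\}$ (for symmetric $V$, the condition $X\cap Vg_0^{-1}=\{g_0^{-1}\}$ does the same job); then $(O\times W)\cap\glambda=(\{g_0\}\times W)\cap\glambda$, which is the collapse you want. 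This is the same left/right care the paper takes in \thref{prop:udiscrete-implies-etale}, where the basic bisections are built from $xV\cap Vx$ rather than $xV$. Your conceptual formulation --- local constancy of the group coordinate --- is already correct and is the cleanest phrasing of this step.
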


\begin{proof}
Let $\delta_e:G\to\{0,1\}$ denote the Dirac function at $e\in G$ defined by $\delta_e(g)=1$ if and only if $g=e$, and $0$ otherwise. We claim that the function 
\begin{align*}
    \psi:\G(\Lambda)\times \G(\Lambda) &\to \C\\
    ((x,P),(y,Q))&\mapsto \delta_e(x^{-1}y)
\end{align*}
is continuous, positive type, properly supported and satisfies $\psi(\gamma,\gamma)=1$ for any $\gamma \in\glambda$. In particular, $\psi$ witnesses the inner amenability of $\glambda$.

We first check that $\psi$ has proper support, let $K\subseteq\G(\Lambda)$ be compact. Since every compact set in $\glambda$ is a closed subset of $\{(x,P)\in\glambda\mid x\in C\}$ for some $C\subseteq G$ compact, we may assume that $K$ is of this form.
Then $$\supp(\psi)\cap(K\times\glambda)\subseteq ((C\times \hull)\cap \glambda)\times((C\times \hull)\cap \glambda),$$ which is compact. A similar argument also shows that $\supp(\psi)\cap(\glambda\times K)$ is compact.

Further, observe that $$\psi(\gamma,\gamma)=\delta_e(x^{-1}x)=\delta_e(e)=1,$$ for any $\gamma=(x,P)\in\glambda$.

Now, we show that $\psi$ is positive type. Let $P,Q\in \Omega_0(\Lambda)=\G(\Lambda)^{(0)}$ be units, $\gamma_1,\ldots,\gamma_n\in \glambda^P$ and $\eta_1,\ldots,\eta_n\in \glambda^Q$. Say $\gamma_1=(x_1,x_1^{-1}P),\ldots,\gamma_n=(x_n,x_n^{-1}P)$ and $\eta_1=(y_1,y_1^{-1}Q),\ldots,\eta_n=(y_n,y_n^{-1}Q)$ for some $x_1,\ldots,x_n\in P$ and $y_1,\ldots,y_n\in Q$. Let $M$ be the following matrix. $$(M_{ij})_{ij}=(\psi(\gamma_i^{-1}\gamma_j,\eta_i^{-1}\eta_j))_{i,j}=(\delta_e({x_j^{-1}x_i}{y_i^{-1}y_j}))_{i,j}\in M_n(\{0,1\}).$$
The matrix $M$ is symmetric since $M_{ij}=1$ if and only if ${x_j^{-1}x_i}={y_j^{-1}y_i}$ which is equivalent to ${x_i^{-1}x_j}={y_i^{-1}y_j}$ which is in turn equivalent to $M_{ji}=1$. Note also that $M_{ik}=1$ if $M_{ij}=M_{jk}=1$. Indeed, the latter condition implies that both ${x_j^{-1}x_i}={y_j^{-1}y_i}$ and ${x_k^{-1}x_j}={y_k^{-1}y_j}$ hold. Multiplying the two equations, we obtain ${x_k^{-1}x_i}={y_k^{-1}y_i}$ which implies $M_{ik}=1$. As a consequence, up to reordering the columns and rows of $M$, the matrix $M$ is of the form $M=diag(M_1,\ldots,M_k)$, where $M_0$ only entries are all $0$, $M_k$ only entries are all $0$ for $k=1,\ldots n$ and $l_0+l_1+\cdots+l_k=n$. In other words, the matrix $M$ is a sum of (multiples of) orthogonal projections, hence it is positive.

Lastly, we check the continuity of $\psi$. Denote by $X$ the space $\bigcup_{P\in\hull}P$. Denoting by $\pi:\glambda\to X$ the projection on the first coordinate, we get that $\psi=\overline{\psi}\circ (\pi\times\pi)$, for some map $\overline{\psi}:X\times X\to \C$. Since $X$ is discrete by \thref{lem:FLC-discretehull}, the map $\overline{\psi}$ is continuous. Hence $\psi$ is continuous.

\end{proof}

The groupoids $\glambda$ and $G\ltimes\Omega^\times(\Lambda)$ are equivalent (see \cite[Section 3]{enstad20}). Hence, we obtain the following consequence as an application of \thref{thm:inner-amenability-FLC} together with \cite[Proposition 8.11]{an21}.
\begin{cor}\thlabel{cor:inner-amenability}
Let $\Lambda$ be an FLC point set in a locally compact second countable group $G$. The following are equivalent.
\begin{enumerate}[label=(\roman*)]
    \item The groupoid $\G(\Lambda)$ is (topologically) amenable;
    \item the action $G\curvearrowright\Omega(\Lambda)$ is amenable;
\end{enumerate}
\end{cor}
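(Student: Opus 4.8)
The statement to prove is \thref{cor:inner-amenability}, the equivalence between topological amenability of the groupoid $\glambda$ and amenability of the transformation groupoid $G\ltimes\Omega(\Lambda)$ (equivalently, amenability of the action $G\curvearrowright\Omega(\Lambda)$). Since both of these are \emph{étale-type} amenability statements about second countable locally compact groupoids, the plan is to deduce the equivalence from a single structural fact, namely that the two groupoids $\glambda$ and $G\ltimes\Omega^\times(\Lambda)$ are \emph{equivalent} in the sense of Muhly--Renault--Williams, and then invoke the invariance of topological amenability under groupoid equivalence. This is exactly the route signalled by the sentence preceding the corollary, which cites \cite[Section 3]{enstad20} for the equivalence and \cite[Proposition 8.11]{an21} for the transfer of amenability.

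First I would recall precisely what \cite[Section 3]{enstad20} provides: the restriction $\glambda = (G\ltimes\Omega(\Lambda))|_{\hull}$ is the reduction of the transformation groupoid to the closed set $\hull$ of units, and $\hull$ meets every orbit of the $G$-action on $\Omega^\times(\Lambda) = \Omega(\Lambda)\setminus\{\emptyset\}$. Concretely, any $P\in\Omega^\times(\Lambda)$ contains some point $x\in P$, and then $xP\in\hull$ lies in the same $G$-orbit as $P$; this is the statement that $\hull$ is a \emph{full} (indeed transversal) subset of units. Reduction of a groupoid to a full closed subset of its unit space yields an equivalent groupoid, so $\glambda\sim G\ltimes\Omega^\times(\Lambda)$ as claimed. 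I would take this equivalence as given, citing \cite{enstad20}, rather than reproving it.

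The second ingredient is that topological amenability is a groupoid-equivalence invariant. This is \cite[Proposition 8.11]{an21} (it also follows from the Anantharaman-Delaroche--Renault theory in \cite{AnRe00} for second countable locally compact Hausdorff groupoids). Applying it to the equivalence of the previous paragraph gives at once that $\glambda$ is topologically amenable if and only if $G\ltimes\Omega^\times(\Lambda)$ is topologically amenable. The final bookkeeping step is to identify amenability of the transformation groupoid $G\ltimes\Omega^\times(\Lambda)$ with amenability of the action $G\curvearrowright\Omega(\Lambda)$ as stated in (ii): since amenability of an action only sees the orbit structure and the single point $\emptyset\in\Omega(\Lambda)$ is $G$-fixed (so the transformation groupoid over $\{\emptyset\}$ is just $G$ acting trivially, whose amenability is governed separately), I would note that when $\Lambda\neq\emptyset$ the orbit of $\Lambda$ lies in $\Omega^\times(\Lambda)$ and the relevant dynamics is carried by $\Omega^\times(\Lambda)$; thus amenability of $G\curvearrowright\Omega(\Lambda)$ is equivalent to amenability of $G\ltimes\Omega^\times(\Lambda)$. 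Chaining the two equivalences yields (i) $\Leftrightarrow$ (ii).

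The main obstacle, and the place to be careful, is verifying the hypotheses of \cite[Proposition 8.11]{an21}: groupoid equivalence in the Muhly--Renault--Williams framework requires the linking space to carry commuting free and proper actions with open, surjective momentum maps, and the transfer of amenability typically requires both groupoids to be second countable, locally compact, Hausdorff and to admit Haar systems. I would check that $\glambda$ is étale (hence carries the counting Haar system, by \thref{prop:udiscrete-implies-etale}, using that FLC implies $U$-discreteness) and that $G\ltimes\Omega^\times(\Lambda)$ carries the Haar system induced by a Haar measure on $G$; second countability holds throughout by the standing hypothesis on $G$. The only genuinely delicate point is confirming that $\hull$ is a \emph{closed} full set of units meeting every orbit of $\Omega^\times(\Lambda)$ so that the reduction is an honest equivalence rather than merely a sub-groupoid inclusion; this is precisely the content extracted from \cite[Section 3]{enstad20}, so I would lean on that reference for the linking-space construction and restrict the new argument to assembling the two cited results.
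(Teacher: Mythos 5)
Your proposal never invokes \thref{thm:inner-amenability-FLC}, and that is the tell that something has gone wrong: the paper derives the corollary explicitly ``as an application of \thref{thm:inner-amenability-FLC} together with \cite[Proposition 8.11]{an21}'', whereas your argument uses only the groupoid equivalence from \cite[Section 3]{enstad20} plus invariance of topological amenability under equivalence. The root of the problem is your reading of item (ii). You identify ``the action $G\curvearrowright\Omega(\Lambda)$ is amenable'' with topological amenability of the transformation groupoid $G\ltimes\Omega^\times(\Lambda)$ in the sense of \cite{AnRe00}; under that reading the corollary would indeed reduce to equivalence-invariance, but it would also be essentially tautological and independent of the main theorem. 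In \cite{an21}, and in this corollary, amenability of the action is the \emph{weaker}, measured/invariant-mean (Zimmer-type) notion. You can confirm this from how the corollary is used in Example \ref{ex:arithmetic}: amenability of the action there yields a $G$-invariant mean on $\Linfty(SL_n(\R)\times\Omega(\Lambda))$, which is then played against non-amenability of $SL_n(\R)$. Correspondingly, \cite[Proposition 8.11]{an21} is not a statement about transfer of amenability along equivalences; it is the result quoted in the introduction that a transformation groupoid is (topologically) amenable precisely when it is inner amenable \emph{and} its underlying action is amenable in this weaker sense.

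With the correct reading of (ii), the step you dismiss as ``final bookkeeping'' --- identifying amenability of $G\ltimes\Omega^\times(\Lambda)$ with amenability of the action --- is exactly the hard implication, and for a non-discrete locally compact group it fails in general without an extra hypothesis: passing from an invariant-mean-type amenability of the action up to topological amenability of the groupoid is precisely what inner amenability buys, which is the theme of the whole paper. The intended assembly is: \thref{thm:inner-amenability-FLC} gives inner amenability of $\glambda$; the equivalence $\glambda\sim G\ltimes\Omega^\times(\Lambda)$ transfers both topological amenability and inner amenability; and \cite[Proposition 8.11]{an21}, applied to the now inner amenable transformation groupoid, converts between its topological amenability and amenability of the action $G\curvearrowright\Omega(\Lambda)$. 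Your structural checks (that $\hull$ is a closed transversal meeting every orbit of $\Omega^\times(\Lambda)$, the Haar system and second countability hypotheses, the bookkeeping at the fixed point $\emptyset$) are sound, but they only handle the easy half of the equivalence; as written, your proof establishes ``$\glambda$ amenable iff $G\ltimes\Omega^\times(\Lambda)$ amenable'' and leaves the actual content of the corollary unproved.
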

One of the applications of \thref{thm:inner-amenability-FLC} is establishing that the groupoids associated with approximate lattices are inner amenable, even in non-amenable groups. In particular, the groupoids associated to model sets (see \thref{ex:Meyer-set}) are inner amenable. An interesting class of examples of model sets arise from lattices in arithmetic groups.
\begin{example}\label{ex:arithmetic}
Consider the embedding 
\begin{align*}
    \iota:\Z[\sqrt{2}]&\to \R\times\R\\
    a+b\sqrt{2} &\mapsto (a+b\sqrt{2},a-\sqrt{2}).
\end{align*}
Using this embedding, we view $SL_n(\Z[\sqrt{2}])$ as an irreducible lattice in $SL_n(\R)\times SL_n(\R)$. If $W$ is any symmetric compact neighborhood of the identity in $SL_n(\R)$ and $\pi_1:SL_n(\R)\times SL_n(\R)\to SL_n(\R)$ denotes the projection on the first coordinate, then the model set $$\Lambda=\pi_1(SL_n(\Z[\sqrt{2}])\cap (SL_n(\R)\times W))$$ is FLC (see Example \ref{ex:Meyer-set}). As a consequence of \thref{thm:inner-amenability-FLC}, the groupoid $\glambda$ is inner amenable. Observe also that there is an $SL_n(\R)$-equivariant inclusion $\iota:\Linfty(SL_n(\R))\to\Linfty(SL_n(\R)\times \Omega(\Lambda))$ induced by the projection $SL_n(\R)\times \Omega(\Lambda)\to SL_n(\R)$. If $\glambda$ was amenable, then by \thref{cor:inner-amenability} there would exists a $G$-invariant mean on $\Linfty(SL_n(\R)\times \Omega(\Lambda))$. Precomposing that mean with the inclusion $\iota$ would give an $SL_n(\R)$-invariant (for the left translation action) mean on $\Linfty(SL_n(\R))$ which would imply the amenability of $SL_n(\R)$. Hence the groupoid $\glambda$ is not amenable.
\end{example}
One can replace the number $2$ by any square free positive integer equal to $2$ or $3$ modulo $4$ in the Example \ref{ex:arithmetic}. More generally, one can consider any totally real Galois extension $F$ of degree $n$ over $\mathbb{Q}$. The analogue of Example \ref{ex:arithmetic} holds provided $\Z[\sqrt{2}]$ is replaced by the ring of integers $\mathcal{O}_F$ of $F$ and the embedding $\iota$ is replaced by 
\begin{align*}
    \iota:\mathcal{O}_F&\to\R^n\\
 z&\mapsto (\sigma_1(z),\ldots,\sigma_n(z)),
\end{align*}
where $\sigma_1,\ldots,\sigma_n$ are real embeddings of $F$ over $\mathbb{Q}$.

\printbibliography
\end{document}